\def\quotient#1#2{%
    \raise1ex\hbox{$#1$}\Big/\lower1ex\hbox{$#2$}%
}
\newtheorem{definition}{Definition}
\newtheorem{lemma}{Lemma}
\newtheorem{theorem}{Theorem}
\newenvironment{proof}{\par\addvspace\topsep\noindent
{\bf Proof:} \ignorespaces }{\qed}
\newcommand{\qed}{\hspace*{\fill}$\Box$\ifmmode\else\par\addvspace\topsep\fi}
\title{Counting sub-multisets of fixed cardinality}
\author[1]{Sebastiano Ferraris\thanks{s.ferraris@ucl.ac.uk}}
\author[1]{Alex Mendelson\thanks{a.mendelson@ucl.ac.uk}}
\author[2]{Gerardo Ballesio\thanks{g.ballesio@gmail.com}}
\author[1]{Tom~Vercauteren\thanks{t.vercauteren@ucl.ac.uk}}
\affil[1]{Translational Imaging Group,  Centre~for~Medical~Image~Computing, Wolfson~House, 4~Stephenson~Way, University~College~London.}
\affil[2]{Independent researcher.}
\begin{document}

\maketitle

\begin{abstract}

    This report presents an expression for the number of a multiset's sub-multisets of a given cardinality as a function of the multiplicity of its elements. This is also the number of distinct samples of a given size that may be produced by sampling without replacement from a finite population partitioned into subsets, in the case where items belonging to the same subset are considered indistinguishable. Despite the generality of this problem, we have been unable to find this result published elsewhere.

\bigskip
\noindent \textbf{Keywords:} enumerative combinatorics, multiset, inclusion exclusion principle, constrained $k$-resolutions, cardinality of the support of the multivariate hypergeometric distribution.

\end{abstract}



\section{Introduction}

A multiset is a generalisation of a set that allows elements to appear an integer number of times, rather than being simply present or absent \cite{bona}. The number of times that an element appears in a multiset is termed its multiplicity. One multiset may be considered a sub-multiset of a second when it does not contain any element with a greater multiplicity. The cardinality of a multiset is the sum of its elements' multiplicities, the number of its elements is a distinct quantity called the dimension. \emph{In this report, we provide a formula for the number of sub-multisets of a given multiset that have a specified cardinality.} 

A concrete example of a multiset is found in a packet of candies distinguishable only by their colour. In this case, each colour of candy corresponds to a distinct element of the multiset whose multiplicity is simply the number of candies of that colour that are present. To ask how many distinct handfuls of a given size may be obtained from the packet is to ask how many multisets of a given cardinality exist that are sub-multisets of the multiset represented by the packet. 

To our surprise, even after extensive searching, we were not able to find an answer to this question in the literature. While the number may be found through exhaustive enumeration of the type described in \cite{hage}, this will quickly become impractical as the dimensions and multiplicities of the multisets increase. In this report, we use a proof based on the $k$-resolutions of $n$ and the inclusion-exclusion principle to derive an expression of this quantity as the sum of binomial coefficients presented in equation (\ref{eq:concise_final}). A concise proof is provided in section \ref{se:concise}, while a longer, step-by-step derivation is provided in sections \ref{se:sampling_example} to \ref{se:inc_exc}. 

Python code to verify the provided formula is available online at\\ \verb#https://github.com/SebastianoF/counting_sub_multisets.git#.

\section{Sampling from a multiset} \label{se:sampling_example}
We begin with a simple multiset that possesses 2 elements with an equal multiplicity of 5:
\begin{align*}
S = \{b,b,b,b,b,w,w,w,w,w \}\,.
\end{align*}
In our candy analogy, this corresponds to a packet containing 5 black candies and 5 white ones. In this case, if we want to know how many handfuls of 5 may be produced\footnote{by drawing without replacement, as is usually the case with candies}, we can simply enumerate them, as has been done in the table below. Here, each row represents a handful, and the black and white circles represent the candies of the corresponding colors.
\begin{center}
	\begin{tabular}{  c | c  c  c  c  c  }
        1 ~& ~~$\bullet$  & $\bullet$ & $\bullet$ &  $\bullet$ & $\bullet$~ \\
		\hline    
        2 ~& ~~$\bullet$  & $\bullet$ & $\bullet$ &  $\bullet$ & $\circ$ \\
		\hline    
        3 ~& ~~$\bullet$  & $\bullet$ & $\bullet$ &  $\circ$ & $\circ$ \\
		\hline    
        4 ~& ~~$\bullet$  & $\bullet$ & $\circ$ &  $\circ$ & $\circ$ \\
		\hline    
        5 ~& ~~$\bullet$  & $\circ$ & $\circ$ &  $\circ$ & $\circ$ \\
		\hline    
        6 ~& ~~$\circ$  & $\circ$ & $\circ$ &  $\circ$ & $\circ$ \\
	\end{tabular}
\end{center}
\vspace{0.1in}
\noindent
The number of black candies must be an integer between 0 and 5, this also specifies the number of white candies, and therefore a total of 6 handfuls are possible. Generalising this, let the multiplicities of the elements (black and white candies) be denoted $a_1$ and $a_2$ respectively, the cardinality of the multiset (packet) be denoted $N = a_1 + a_2$, and the cardinality of the sub-multisets (handfuls) be denoted $n$. Providing $n$ is not greater than $a_1$ or $a_2$, the multiplicity of the first element in the sub-multiset may range between 0 and $n$, so there will be $n+1$ possible outcomes. 


If we allow the size of the handful to exceed the number of candies of a given colour, there is an additional constraint we must consider: it is now possible to ``run out of'' one colour of candy as the handful is drawn. If, instead of the original packet, we drew a handful of 5 from a smaller packet containing only 3 black candies and 4 white ones, we would no longer be able to produce the handfuls numbered 1, 2 and 6 in the table above; the number of black candies we draw must be between $n - a_2 = 1$ and $a_1 = 3$. Generalising again, a multiset with two elements of multiplicities $a_1$ and $a_2$ has the following number of sub-multisets with cardinality $n$:
\begin{equation*}
    r = \min(n, a_1) - \max( 1, n - a_2 ) + 2\,.
\end{equation*}

While it has been straightforward to solve this problem in the case of two elements, when we increase the number of colours to a number beyond two, this can no longer be done so intuitively. For example, we might ask how many distinct handfuls of 12 may be drawn from a packet containing 5 blue candies, 9 green ones, and 14 red ones.\footnote{
    If we were to draw handfuls at random, their probabilities would be described by the multivariate hyper-geometric distribution \cite{berkopec}. The number of possible handfuls corresponds to the cardinality of its support.
}
To answer this type of question, we must reformulate the problem in terms of integer resolutions.

%
%


\section{Reformulating the problem with $k$-resolutions}

We consider the multiset $S$ of cardinality $N$ and dimension $k$ where each element $s_j$ has a multiplicity  of $a_j$:
\begin{align*}
S = \{ 
\underbrace{s_1,s_1, ..., s_1}_{a_1\text{-instances}},
\underbrace{s_2, s_2, ..., s_2}_{a_2\text{-instances}},
...,
\underbrace{s_k,s_k, ..., s_k}_{a_k\text{-instances}}
  \} \,,
\end{align*}
where 
\begin{align*}
 a_{1} + a_{2} + \cdots + a_{k} = N\,.
\end{align*}
Providing some ordering is available for the elements, the sequence of their multiplicities $(a_1, a_2, ..., a_k)$ is all that is required to uniquely specify $S$. 
Any sub-multiset of $S$ with cardinality $n$ ($n \leq N$) can be similarly specified by a sequence $(x_1, x_2, ..., x_k)$ which must satisfy the constraints
\begin{align} 
    \label{eq:simple_kresolution}
x_{1} + x_{2} + \cdots + x_{k} = n\,, \qquad  0 \leq x_{j} \leq a_j\,.
\end{align}
Sequences $(x_1, x_2, ..., x_k)$ that satisfy only $x_{1} + x_{2} + \cdots + x_{k} = n$ are known as $k$-resolutions of $n$, and there are established techniques to count them (see \cite{calcolod} or \cite{knuth}, for example). 
The strategy we use to count sequences that satisfy (\ref{eq:simple_kresolution}) is based on their correspondence to the $k$-resolutions of $n$ that satisfy the additional constraints $0 \leq x_{j} \leq a_j $.

\subsection*{$k$-resolutions of $n$}

We denote the number of the $k$-resolutions of $n$
\begin{align}\label{eq:unconstrained}
	R_k^n = \arrowvert \lbrace (x_{1} , x_{2} , \dots , x_{k})  \mid x_{1} + x_{2} + \dots + x_{k} = n, \phantom{z} x_{i} \geq 0  \rbrace \arrowvert\,.
\end{align}
For any choice of $k$ and $n$, $R_k^n $ can be found by considering a row of $n$ indistinguishable candies and $k - 1$ dividers that can be placed to split the row of candies into $k$ subsets. Here, the number of candies in the $j^{th}$ subset corresponds to the integer value of the summand $x_j$. 
For example, the $3$-resolution of $8$ given by $3 + 1 + 4 = 8$ is represented
\begin{center}
	\begin{tabular}{  c  c  c | c | c  c  c  c  }
		$\bullet$  & $\bullet$ & $\bullet$ &  $\bullet$ & $\bullet$ & $\bullet$ &  $\bullet$ & $\bullet$ 
	\end{tabular}
\end{center}
\vspace{0.1in}

\noindent
With this representation, the number of permutations of the full set of candies and dividers is $(n+k-1)!$. Because we consider all candies and dividers indistinguishable, this number will overestimate the number of distinguishable arrangements by a factor of $n! \times (k-1)!\:$. Correcting for this, we obtain
\begin{align}\label{eq:card_k_resolution}
    R_k^n = \frac{(n+k-1)!}{n!(k-1)!} = \binom{ n + k - 1}{ k - 1} = \binom{ n + k - 1}{ n }\,.
\end{align}
This is the number of sub-multisets of cardinality $n$ that a multiset has when $n$ is smaller than all $a_j$.

\subsection*{$k$-resolutions of $n$ with lower constraints }

Before considering the constraints of the form $0 \leq x_{j} \leq a_j$ that occur in our original problem, it is instructive to consider another type of constraints: those of the form $x_j \geq a_j \geq 0$. We shall call these \textit{lower constraints}. The number of $k$-resolutions that satisfy the set of lower constraints specified by the sequence $(a_1,a_2,\dots,a_k)$ is given
\begin{align*}
R_{ (a_1,a_2, \dots , a_k )^{\downarrow} }^{n} = \arrowvert \lbrace (x_{1} , x_{2} , \dots , x_{k})  \mid x_{1} + x_{2} + \dots + x_{k} = n, \phantom{z} x_{i} \geq a_i  \rbrace \arrowvert\,.
\end{align*}
To determine this quantity, we perform the substitution $y_j = x_j - a_j$ for each of the $x_j$:
\begin{align*}
x_{1} + x_{2} + \dots + x_{k} = n  & & x_{i} \geq a_{i} \\
x_{1} - a_{1} + x_{2} - a_{2}  + \dots + x_{k} - a_{k} = n - \sum_{j=1}^{k} a_{j}  & & x_{i} - a_{i} \geq 0 \,.
\end{align*}
This produces the equivalent equation
\begin{align}
    \label{eq:cond_k_resolution_lower}
y_{1} + y_{2}  + \dots + y_{k} = n - \sum_{j=1}^{k} a_{j}  & &  y_{i} \geq 0 \qquad y_{i} = x_{i} - a_{i}\,.
\end{align}
Noting that $n - \sum_{j=1}^{k} a_{j}$ is not negative, we can see that this equation has the same form as equation (\ref{eq:unconstrained}). By equation (\ref{eq:card_k_resolution}), the number of solutions to equation (\ref{eq:cond_k_resolution_lower}) is $R_k^{n - \sum_{j=1}^{k} a_{j} }$, and therefore
\begin{align}     \label{eq:lowercon}
R_{ (a_1,a_2, \dots , a_k )^{\downarrow} }^{n} = \binom{n - \sum_{j=1}^{k} a_{j}  + k - 1}{k-1} = \binom{n - \sum_{j=1}^{k} a_{j}  + k - 1}{n - \sum_{j=1}^{k} a_{j}}\,.
\end{align}

\subsection*{$k$-resolutions with upper constraints}
We now consider the case that corresponds to our original problem. Here, each of the $x_j$ satisfies a constraint of the form $x_{j} \leq a_j$. We denote the number of \textit{upper constrained} $k$-resolutions of $n$
\begin{align}\label{eq:cond_k_resolution_upper} 
R_{ (a_1,a_2, \dots , a_k)^{\uparrow} }^{n} = \arrowvert \lbrace (x_{1} , x_{2} , \dots , x_{k})  \mid x_{1} + x_{2} + \dots + x_{k} = n, 0 \leq x_{i} \leq a_i  \rbrace \arrowvert\,.
\end{align}
The previous strategy can not be adapted so easily to this new case. We could make the substitution
\begin{align*}
x_{1} + x_{2} + \dots + x_{k} = n  & & x_{i} \leq a_{i} \\
x_{1} + x_{2} + \dots + x_{k} = n  & & a_{i} - x_{i} \geq 0 \\
a_{1} - x_{1} + a_{2} -x_{2} + \dots + a_{k} -x_{k} = \sum_{j=1}^{k} a_{j} - n  & & a_{i}- x_{i} \geq 0 \\
\end{align*}
to obtain the form 
\begin{align}
    \label{eq:wrongsub}
y_{1} + y_{2}  + \dots + y_{k} = \sum_{j=1}^{k} a_{j} - n    & &  y_{i} =  a_{i} - x_{i}\,.     
\end{align}
It might seem like we could apply the same reasoning we used for equation (\ref{eq:cond_k_resolution_lower}) to produce the result 
\begin{align}
	\label{eq:wrong_answer}
	R_{ (a_1,a_2, \dots , a_k)^{\uparrow} }^{n} = \binom{\sum_{j=1}^{k} a_{j} - n   + k - 1}{k-1} = \binom{\sum_{j=1}^{k} a_{j} - n   + k - 1}{\sum_{j=1}^{k} a_{j} - n } \,,
\end{align}
but \textit{this formula is not correct}. Unlike those of equation (\ref{eq:cond_k_resolution_lower}), the corresponding $y_j$ of equation (\ref{eq:wrongsub}) may take negative values.

This can be seen in the counter example that follows. If we consider the problem specified by $n = 5$, $k = 3$ and $(a_1,a_2,a_3) = (2,3,3)$, then the possible resolutions are the 9 that follow in the form $[x_1,x_2,x_3]$:
\begin{align*}
&[0, 2, 3], [0, 3, 2], [1, 1, 3], \\
&[1, 2, 2], [1, 3, 1], [2, 0, 3],\\ 
&[2, 1, 2], [2, 2, 1],[2, 3, 0]\,.
\end{align*}
As can be seen below, equation (\ref{eq:wrong_answer}) would suggest that there are $10$:
\begin{align*}
\binom{8 - 5   + 3 - 1}{3 - 1} = 10\,.
\end{align*}
This has happened because we have also counted the invalid solution
\begin{align*}
[-1, 3, 3]\,.
\end{align*}

To find $R_{ (a_1,a_2, \dots , a_k)^{\uparrow} }^{n}$, the number of upper constrained $k$-resolutions of $n$, we must use a different strategy.


\section{Solution from the inclusion-exclusion principle} \label{se:inc_exc}

We begin again with the equations specifying a $k$-resolution of $n$ as a sequence of integers $x_j$ that satisfy the original constraints:
\begin{align*}
x_{1} + x_{2} + \dots + x_{k} = n\,,  \qquad 0 \leq x_{i} \leq a_{i}\,.
\end{align*}
We consider the following:
\begin{align*}
A &= \lbrace (x_{1} , x_{2} , \dots , x_{k})  \mid x_{1} + x_{2} + \dots + x_{k} = n, \text{ such that } \forall i  \phantom{z} 0 \leq x_{i} \leq a_{i}  \rbrace \\
B &= \lbrace (x_{1} , x_{2} , \dots , x_{k})  \mid x_{1} + x_{2} + \dots + x_{k} = n, \text{ such that } \exists i  \phantom{z} x_{i} \geq a_{i} +1  \rbrace \\
C &= \lbrace (x_{1} , x_{2} , \dots , x_{k})  \mid x_{1} + x_{2} + \dots + x_{k} = n, \text{ such that } \forall i  \phantom{z} x_{i} \geq 0 \rbrace \,.
\end{align*}
Here, $C$ is the full set of $k$-resolutions of $n$, $A$ is the set of the upper constrained resolutions 
(whose cardinality we wish to determine), and $B$ is the set where at least one of the $x_j$ exceeds the shared bounds $a_j$. 
We can see that $A = C \setminus B$ and $\arrowvert A \arrowvert =  \arrowvert C \arrowvert - \arrowvert B \arrowvert $. \\

By (\ref{eq:card_k_resolution}), the cardinality of $C$ is 
\begin{align*}
\arrowvert C \arrowvert = \binom{n+k-1}{k-1}\,.
\end{align*}
To determine $|A|$, we now only need to determine $|B|$. $B$ may be written as the union 
\begin{align*}
B = B_{1}\cup B_{2} \cup \dots \cup B_{k} \,,
\end{align*}
where
\begin{align*}
B_{j} &= \lbrace (x_{1} , x_{2} , \dots , x_{k})  \mid x_{1} + x_{2} + \dots + x_{k} = n,  x_{j} \geq a_{j} +1  \rbrace \,.
\end{align*}
By the inclusion-exclusion principle,
\begin{align*}
\arrowvert  B_{1}\cup B_{2} \cup \dots \cup B_{k} \arrowvert 
&= 
\sum_{m=1}^{k} (-1)^{m+1}\sum_{1\leq i_1 < i_2 < \dots <i_m \leq  k} \arrowvert B_{i_1}\cap B_{i_2} \cap \dots \cap  B_{i_m}  \arrowvert\,. \\
\end{align*}
At this point, we note that the sets $B_{i_1} \cap B_{i_2} \cap \dots \cap  B_{i_m}$ are sets of \textit{lower constrained} $k$-resolutions of the form 
\begin{align*}
B_{i_1} \cap B_{i_2} \cap \dots \cap  B_{i_m} 
&= 
\lbrace (x_{1} , x_{2} , \dots , x_{k})  
\mid 
x_{1} + x_{2} + \dots + x_{k} = n,\, x_{i_l} \geq a_{i_l} + 1\, , l = 1, \dots, m \rbrace \,.
\end{align*}
By equation (\ref{eq:lowercon}), the value of each summand must therefore be

%
\begin{align*}
\arrowvert  B_{i_1} \cap B_{i_2} \cap \dots \cap  B_{i_m}\arrowvert 
&= 
\binom{ n- (a_{i_{1}} + 1 + a_{i_{2}} + 1 +\dots + a_{i_{m}}+1) + k - 1 }{ k-1 }  \\
&= 
\binom{ n- (\sum_{l=1}^{m}a_{i_{l}} +m) + k - 1 }{ k-1 }  \,.
\end{align*}
With this last formula we can finally determine the cardinality of $A$ (i.e., the number of upper constrained k-resolutions):
\begin{align*}
R_{ (a_1,a_2, \dots , a_k)^{\uparrow} }^{n} &=  \arrowvert  A \arrowvert  = \arrowvert C \arrowvert - \arrowvert B \arrowvert \\
&=\binom{n+k-1}{k-1} - 
\sum_{m=1}^{k} (-1)^{m+1}
\sum_{1\leq i_1 < i_2 < \dots <i_m \leq  k} 
\binom{n-\sum_{l=1}^{m} a_{i_{l}} - m + k - 1}{k-1} \,.
\end{align*}
This can be expressed more concisely as
\begin{align}
	R_{ (a_1,a_2, \dots , a_k)^{\uparrow} }^{n} 
	= 
	\sum_{ L \in \mathcal{P}(I_k)} 
	(-1)^{|L|} 
	\binom{ n + k - 1 - |L| - \sum\limits_{i \in L} a_i }{ k-1 }\,,
\end{align}
where $\mathcal{P}(I_k)$ denotes the power set of $I_k$, and binomial coefficients of the form $\binom{\alpha}{\beta}$ with $\alpha < 0$ or $\alpha < \beta$ are taken to be zero in line with their combinatorial meaning.
 


\section{Concise proof} \label{se:concise}

This section provides a summary of the definitions and proofs presented in this report.

\begin{definition}
Given a finite set $A$ of cardinality $k$, a \emph{multiset} $S$ is a collection of elements of $A$ where each element of $A$ can appear zero or more times, and where the order of the elements and appearances does not matter.
The number of occurrences of an element $a \in A$ in $S$ is called its \textit{multiplicity}. The sum of all a multiset's elements' multiplicities is its \textit{cardinality}. The \textit{dimension} of the multiset $S$ is the number of distinct elements that it contains one or more times.
\end{definition}

\begin{definition}
	A \emph{sub-multiset} $S'$ of a multiset $S$ is a multiset whose elements are all also elements of $S$. The elements' multiplicities in $S'$ must be less than or equal to their corresponding multiplicities in $S$.
\end{definition}
We wish to determine the number of multisets of cardinality $n$ that may be considered sub-mulitisets of a multiset $S$ of cardinality $N$ and dimension $k$. The $k$ distinct elements have multiplicities in $S$ specified by the sequence $(a_1,a_2,\dots,a_k)$. This problem can be formulated in terms of constrained resolutions of integers.

\begin{definition}
	A \emph{$k$-resolution of $n$} is an element of the set
	\begin{align*} 
	\lbrace (x_{1} , x_{2} , \dots , x_{k})  \mid x_{1} + x_{2} + \dots + x_{k} = n, \phantom{z} x_{i} \geq 0  \rbrace \,.
	\end{align*}
	The cardinality of this set is denoted
	\begin{align*}
	R_{k}^{n} = \arrowvert \lbrace (x_{1} , x_{2} , \dots , x_{k})  \mid x_{1} + x_{2} + \dots + x_{k} = n, \phantom{z} x_{i} \geq 0  \rbrace \arrowvert\,.
	\end{align*}
	The cardinality of the set of $k$-resolutions with \emph{lower constraints} specified by a sequence $(a_1,a_2,\dots,a_k)$ is
	\begin{align*}
	R_{ (a_1,a_2, \dots , a_k )^{\downarrow} }^{n}  = \arrowvert \lbrace (x_{1} , x_{2} , \dots , x_{k})  \mid x_{1} + x_{2} + \dots + x_{k} = n, 0 \geq x_{i} \geq a_i  \rbrace \arrowvert\,,
	\end{align*}
	while the cardinality of the set of $k$-resolutions with \emph{upper constraints} specified by such a sequence is
	\begin{align*}
	R_{ (a_1,a_2, \dots , a_k)^{\uparrow} }^{n} = \arrowvert \lbrace (x_{1} , x_{2} , \dots , x_{k})  \mid x_{1} + x_{2} + \dots + x_{k} = n, x_{i} \leq a_i  \rbrace \arrowvert\,.
	\end{align*}
\end{definition}
%
The number of sub-multisets is thus equivalent to the number of $k$-resolutions of $n$ with the upper constraints $(a_1, a_2, \dots, a_k)$.

\begin{lemma}
	The cardinality of the set of $k$-resolutions with lower constraints specified by $(a_1,a_2,\cdots,a_k)$ is given
	\begin{align*}
	R_{ (a_1,a_2, \dots , a_k )^{\downarrow} }^{n} 
	= \binom{n - \sum_{j=1}^{k} a_{j}  + k - 1}{k-1} \,.
	\end{align*}
\end{lemma}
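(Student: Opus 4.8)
The plan is to reduce the lower-constrained count to the already-established unconstrained count by a change of variables. First I would introduce shifted variables $y_j = x_j - a_j$ for each $j \in \{1, \dots, k\}$. Since the defining constraint is $x_j \geq a_j$, each $y_j$ is a nonnegative integer, and conversely any nonnegative assignment to the $y_j$ yields an admissible tuple via $x_j = y_j + a_j$. Because this substitution is invertible over the integers, it furnishes a bijection between the lower-constrained $k$-resolutions of $n$ counted by $R_{(a_1,\dots,a_k)^{\downarrow}}^{n}$ and the solutions of the transformed system.

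Next I would compute the target sum of the transformed problem: summing $y_j = x_j - a_j$ over $j$ gives $y_1 + \cdots + y_k = n - \sum_{j=1}^{k} a_j$, with each $y_j \geq 0$. This is exactly the unconstrained $k$-resolution problem of equation (\ref{eq:unconstrained}) with $n$ replaced by $n - \sum_{j=1}^{k} a_j$. Invoking the closed form of equation (\ref{eq:card_k_resolution}), the number of such solutions is $R_k^{\,n - \sum_j a_j} = \binom{n - \sum_{j=1}^{k} a_j + k - 1}{k-1}$, which, transported back through the bijection, is the claimed value of $R_{(a_1,\dots,a_k)^{\downarrow}}^{n}$.

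The only point that requires care — and the nearest thing to an obstacle in an otherwise routine argument — is the regime where $n - \sum_j a_j < 0$. In that case the transformed equation admits no nonnegative solutions, so the count is genuinely zero; this agrees with the convention adopted in the excerpt that $\binom{\alpha}{\beta}$ vanishes when $\alpha < 0$. I would emphasise that the bijection itself needs no positivity hypothesis, since $y_j \mapsto y_j + a_j$ is invertible regardless of the sign of $n - \sum_j a_j$. The vanishing of the formula simply records the emptiness of the solution set when the target sum is negative, so the single binomial expression correctly covers both the feasible and infeasible regimes without a separate case analysis.
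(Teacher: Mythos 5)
Your proof is correct and follows essentially the same route as the paper: the substitution $y_j = x_j - a_j$ turning the lower-constrained problem into an unconstrained $k$-resolution of $n - \sum_{j=1}^{k} a_j$, then applying the closed form $\binom{n - \sum_j a_j + k - 1}{k-1}$. Your explicit discussion of the infeasible regime $n - \sum_j a_j < 0$ is a welcome extra precision (the paper relegates this to its stated convention on vanishing binomial coefficients, which it needs later in the inclusion--exclusion step), but it does not change the substance of the argument.
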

\begin{proof}
	
	\begin{align*} 
	R_{ (a_1,a_2, \dots , a_k )^{\downarrow} }^{n}
	&=
	\arrowvert \lbrace (x_{1} , x_{2} , \dots , x_{k})  \mid x_{1} + x_{2} + \dots + x_{k} = n, x_{i} \geq a_i  \rbrace \arrowvert\\
	&=
	\arrowvert \lbrace (x_{1} , x_{2} , \dots , x_{k})  \mid x_{1} - a_{1} + x_{2} - a_{2}  + \dots + x_{k} - a_{k} = n - \sum_{j=1}^{k} a_{j}, x_{i} \geq a_i  \rbrace \arrowvert\\
	&=
	\arrowvert \lbrace (x_{1} , x_{2} , \dots , x_{k})  \mid y_{1} + y_{2}  + \dots + y_{k} = n - \sum_{j=1}^{k} a_{j}, ~  y_{i} \geq 0  \rbrace \arrowvert\\
    &\text{and so using the standard result for unconstrained $k$-resolutions, } \\
    &=\binom{n - \sum_{j=1}^{k} a_{j}  + k - 1}{k-1} \,.
	\end{align*}
\end{proof}

\begin{theorem}
	Indicating  $\{1, 2,\cdots, k\} $ with $I_k$, the cardinality of the set of upper constrained $k$-resolutions is
	\begin{align}
        \label{eq:concise_final}
		R_{ (a_1,a_2, \dots , a_k)^{\uparrow} }^{n} 
		= 
		\sum_{ L \in \mathcal{P}(I_k)} 
		(-1)^{|L|} 
		\binom{ n + k - 1 - |L| - \sum\limits_{i \in L} a_i }{ k-1 }\,,
	\end{align}
	where $\mathcal{P}(I_k)$ is the power set of $I_k$.
\end{theorem}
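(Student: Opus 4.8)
The plan is to prove the formula by inclusion--exclusion, decomposing the set of upper-constrained resolutions as ``the universe minus the bad events.'' First I would let $C$ be the full set of $k$-resolutions of $n$ (all $x_i \geq 0$), whose cardinality is $\binom{n+k-1}{k-1}$ by equation (\ref{eq:card_k_resolution}). For each $j \in I_k$ I would introduce the violation set $B_j$ of those resolutions with $x_j \geq a_j + 1$. The upper-constrained resolutions are precisely the elements of $C$ that lie in none of the $B_j$, so $R_{(a_1,\dots,a_k)^{\uparrow}}^{n} = |C| - |B_1 \cup \dots \cup B_k|$, and the whole problem reduces to evaluating this union.

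Next I would apply the inclusion--exclusion principle to $|B_1 \cup \dots \cup B_k|$, turning it into an alternating sum over the cardinalities of the intersections $B_{i_1} \cap \dots \cap B_{i_m}$. The key observation is that each such intersection is itself a set of \emph{lower}-constrained resolutions: imposing $x_{i_l} \geq a_{i_l} + 1$ simultaneously for $l = 1, \dots, m$ is exactly a lower-constraint problem with thresholds $a_{i_l} + 1$. By the preceding Lemma its cardinality is $\binom{n - \sum_{l}(a_{i_l}+1) + k - 1}{k-1} = \binom{n - \sum_l a_{i_l} - m + k - 1}{k-1}$. Indexing each intersection by the subset $L = \{i_1, \dots, i_m\}$, and absorbing the $|C|$ term (the $L = \emptyset$ contribution, with sign $(-1)^0$) into the inclusion--exclusion sum, would collapse everything into the single sum over $\mathcal{P}(I_k)$ weighted by $(-1)^{|L|}$, which is exactly equation (\ref{eq:concise_final}).

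The step I expect to be the main obstacle --- and the very one that invalidated the naive substitution in equation (\ref{eq:wrong_answer}) --- is justifying the use of the Lemma's formula when $n - \sum_{i \in L} a_i - |L|$ is negative. The Lemma is derived under the tacit hypothesis that this quantity is non-negative; when it is negative the corresponding lower-constrained set is genuinely empty, since the constraints $x_{i_l} \geq a_{i_l}+1$ already force a total strictly exceeding $n$. I would therefore verify that the stated binomial convention handles this uniformly: with top entry $n - \sum_{i\in L} a_i - |L| + k - 1$ and bottom entry $k - 1$, the condition $\alpha < \beta$ reads $n - \sum_{i \in L} a_i - |L| < 0$, which is precisely the infeasibility condition, so the convention assigns each empty intersection the value zero. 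Confirming this equivalence is what makes the formula correct for all $n$, $k$, and $a_i$, and it is the crux of the argument.
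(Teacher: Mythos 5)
Your proposal is correct and follows essentially the same route as the paper's own proof: the decomposition $|A| = |C| - |B_1 \cup \dots \cup B_k|$, inclusion--exclusion over the violation sets, and the lower-constraint Lemma applied to each intersection with thresholds $a_{i_l}+1$. Your explicit verification that the zero-binomial convention ($\alpha < \beta$ exactly when the intersection is infeasible) covers the degenerate intersections is a point the paper only states in passing, so your write-up is, if anything, slightly more careful on that step.
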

\begin{proof}
	We define the sets $A$, $B$ and $C$ as follows:
	\begin{align*}
    A &= \lbrace (x_{1} , x_{2} , \dots , x_{k})  \mid x_{1} + x_{2} + \dots + x_{k} = n, \text{ such that } \forall i  \phantom{z} 0 \leq x_{i} \leq a_{i}  \rbrace \\
    B &= \lbrace (x_{1} , x_{2} , \dots , x_{k})  \mid x_{1} + x_{2} + \dots + x_{k} = n, \text{ such that } \exists i  \phantom{z} x_{i} \geq a_{i} +1  \rbrace \\
    C &= \lbrace (x_{1} , x_{2} , \dots , x_{k})  \mid x_{1} + x_{2} + \dots + x_{k} = n, \text{ such that } \forall i  \phantom{z} x_{i} \geq 0 \rbrace \,.
	\end{align*}
    From this, it follows that
	\begin{align*}
	A \cup B = C
	\qquad \qquad 
	A\cap B = \emptyset 
	\qquad \qquad 
	\arrowvert A \arrowvert =  \arrowvert C \arrowvert - \arrowvert B \arrowvert 
	\qquad \qquad 
	\arrowvert C \arrowvert = \binom{n+k-1}{k-1}\,.
	\end{align*}
	The set $B$ can be written as the union of subsets 
	\begin{align*}
	B &= B_{1}\cup B_{2} \cup \dots \cup B_{k}\,,
	\end{align*}
    where
	\begin{align*}
	B_{j} &= \lbrace (x_{1} , x_{2} , \dots , x_{k})  \mid x_{1} + x_{2} + \dots + x_{k} = n, x_{j} \geq a_{j} +1  \rbrace \,.
	\end{align*}
	Due to the inclusion-exclusion principle,
	\begin{align*}
	\arrowvert  B_{1}\cup B_{2} \cup \dots \cup B_{k} \arrowvert 
	&= 
	\sum_{m=1}^{k} (-1)^{m+1}\sum_{1\leq i_1 < i_2 < \dots <i_j \leq  k} \arrowvert B_{i_1}\cap B_{i_2} \cap \dots \cap  B_{i_m}  \arrowvert \,. \\
	\end{align*}
	From the previous lemma, it follows that
	\begin{align*}
	\arrowvert  B_{i_1} \cap B_{i_2} \cap \dots \cap  B_{i_m}\arrowvert 
	&= 
	\binom{ n- (a_{i_{1}} +1 + a_{i_{2}} + 1 +\dots + a_{i_{m}}+1) + k - 1 }{ k-1 }  \,,
	\end{align*}
	and therefore
	\begin{align*}
		R_{ (a_1,a_2, \dots , a_k)^{\uparrow} }^{n} 
		=
		\binom{n+k-1}{k-1} - \sum_{m=1}^{k} (-1)^{m+1}\sum_{1\leq i_1 < i_2 < \dots <i_j \leq  k} \binom{n-\sum_{l=1}^{m} a_{i_{l}} - m + k - 1}{k-1} \,.
	\end{align*}
\end{proof}

\section*{Acknowledgment}

Sebastiano Ferraris is supported by the EPSRC-funded UCL Centre for Doctoral Training in Medical Imaging (EP/L016478/1).\\
Alex Mendelson is supported  by UCL (code ELCX),
a CASE studentship with the EPSRC and GE healthcare, EPSRC
grants (EP/H046410/1, EP/ H046410/1, EP/J020990/1, EP/K005278),
the MRC (MR/J01107X/1), the EU-FP7 project VPH-DARE@IT
(FP7-ICT-2011-9-601055), the NIHR Biomedical Research Unit (Dementia) at UCL and the National Institute for Health Research University College London Hospitals Biomedical Research Centre (NIHR BRC UCLH/UCL High Impact Initiative). \\
We would like to thank Carlo Matriconda and Alberto Tonolo for having inspired this work.



\begin{thebibliography}{20}

\bibitem{berkopec}
Berkopec, Aleš. HyperQuick algorithm for discrete hypergeometric distribution. Journal of Discrete Algorithms 5.2 (2007): 341-347.

\bibitem{bona}
Bona, Miklos. Introduction to enumerative combinatorics. McGraw-Hill Higher Education, 2007.

\bibitem{calcolod}
C. Mariconda, A. Tonolo, \emph{Calcolo Discreto}, Apogeo 2012.

\bibitem{hage}
Hage, Jurriaan. Enumerating submultisets of multisets. Information processing letters 85.4 (2003): 221-226.

\bibitem{knuth}
Patashnik, RL Graham DE Knuth O. Concrete Mathematics-A Foundation for Computer Science. (1989).

\bibitem{ross}
Ross, Sheldon M. Introduction to probability and statistics for engineers and scientists. Academic Press, 2014.







\end{thebibliography}
\end{document}